\DeclareMathOperator{\pnt}{\raise 0.5mm \hbox{\large\textbf{.}}}
\newcommand{\note}[2][ ]{}
\newtheorem{theorem}{Theorem}
\newtheorem{problem}[theorem]{Problem}
\theoremstyle{definition}
\newtheorem{remark}[theorem]{Remark}
\newtheorem{question}[theorem]{Question}
\begin{document}
\title[Three more proofs for Merca's partition function]{Three more proofs of two congruences\\for Merca's partition function}
\author{Fabrizio Zanello} \address{Department of Mathematical Sciences\\ Michigan Tech\\ Houghton, MI  49931}
\email{zanello@mtu.edu}
\thanks{2020 {\em Mathematics Subject Classification.} Primary: 11P83; Secondary: 05A17, 11P84.\\\indent 
{\em Key words and phrases.} Parity of partitions; eta-quotient; binary $q$-series; Merca's partition function.}

\maketitle

\begin{abstract}
In this note, we provide three new, very short proofs of two interesting congruences for Merca's partition function $a(n)$, which enumerates integer partitions where the odd parts have multiplicity at most 2. These modulo 2 congruences were first shown elementarily by Sellers. We then frame $a(n)$ into the much broader context of eta-quotients, and suggest how to comprehensively describe its parity behavior. In particular, extensive computations suggest that $a(n)$ is odd precisely 25\% of the time.
\end{abstract}
{\ }\\

In an interesting recent article on overpartitions, Merca \cite{mer} considered the parity of the function, $a(n)$, that enumerates integer partitions where no part is 3 (mod 6). Equivalently, $a(n)$ counts the partitions in which all odd parts appear with multiplicity at most 2. Merca proved, among other facts, that $a(n)$ is identically even along the arithmetic progressions $n=4m+2$ and $4m+3$. Answering Merca's call for a more direct and elementary argument, Sellers \cite{sell} found two elegant, additional proofs of his result.

The goal of this brief note is to offer three more, extremely short proofs of Merca's theorem. Further, we place the parity of $a(n)$ into the much broader context of eta-quotients, where the behavior of $a(n)$ (mod 2) is in part determined by that of the 6-regular partitions studied in \cite{KZ2}. Interestingly, extensive computational data suggests that $a(n)$ is odd precisely 25\% of the time (i.e., with natural density $1/4$). Equivalently, under our \lq \lq master conjecture'' on eta-quotients (\cite[Conjecture 4]{KZ}), no nonconstant arithmetic progression appears to exist where $a(n)$ is identically even, outside of the two discovered by Merca.

We say that two series, $\sum b_nq^n$ and $\sum c_nq^n$,  are \emph{congruent modulo 2}, and write $\sum b_nq^n \equiv_2 \sum c_nq^n$, if $b_n \equiv c_n$ (mod 2) for all $n$. Set $f_t=f_t(q)= \prod_{i=1}^\infty (1-q^{ti})$. It is easy to see that the generating function of $a(n)$ satisfies
$$\sum_{n\ge 0}a(n)q^n=\frac{f_3}{f_1f_6}$$
(see, e.g., \cite[Identity (2)]{sell}). Since $f_6\equiv_2 f_3^2$, it follows that
$$\sum_{n\ge 0}a(n)q^n\equiv_2 \frac{1}{f_1f_3}.$$

We also record the following identity that characterizes the parity of 3-core partitions (\cite[Corollary 1]{rob}):
\begin{equation}\label{33}
\frac{f_3^3}{f_1}\equiv_2 \sum_{n\in \mathbb Z}q^{n(3n-2)}.
\end{equation}
We are now ready for our three proofs of Merca's result.

\begin{theorem}[\cite{mer}]
$a(4m+2)$ and $a(4m+3)$ are identically zero (mod 2).
\end{theorem}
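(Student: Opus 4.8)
The plan is to rewrite the already-established congruence $\sum_{n\ge 0}a(n)q^n\equiv_2 1/(f_1f_3)$ as a product of two power series: one that is governed by the $3$-core identity \eqref{33}, and one whose monomials all have exponent divisible by $4$. A one-line residue count on the exponents then kills the progressions $4m+2$ and $4m+3$ simultaneously.

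Concretely, I would start from the factorization
$$\frac{1}{f_1f_3}=\frac{f_3^3}{f_1}\cdot\frac{1}{f_3^4},$$
and substitute \eqref{33} into the first factor, obtaining $\sum_{n\ge 0}a(n)q^n\equiv_2\big(\sum_{n\in\Z}q^{n(3n-2)}\big)/f_3^4$. For the second factor I would use the elementary congruence $f_t^2\equiv_2 f_{2t}$ (already invoked in the form $f_6\equiv_2 f_3^2$ above), applied twice: $f_3^4\equiv_2 f_6^2\equiv_2 f_{12}$. This yields
$$\sum_{n\ge 0}a(n)q^n\equiv_2 \Big(\sum_{n\in\Z}q^{n(3n-2)}\Big)\cdot\frac{1}{f_{12}}.$$

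Now I would analyze exponents modulo $4$. Every monomial of $1/f_{12}=\prod_{i\ge1}(1-q^{12i})^{-1}$ has exponent divisible by $12$, hence $\equiv 0\pmod 4$. And $n(3n-2)=3n^2-2n$ is $\equiv 0\pmod 4$ when $n$ is even and $\equiv 1\pmod 4$ when $n$ is odd (a two-line check on $n=0,1,2,3$, or write $n=2k$ resp.\ $n=2k+1$). So every exponent occurring on the right-hand side is $\equiv 0$ or $1\pmod 4$; in particular the coefficients of $q^{4m+2}$ and $q^{4m+3}$ are literally $0$, forcing $a(4m+2)\equiv a(4m+3)\equiv 0\pmod 2$.

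The genuinely load-bearing choice here is peeling off exactly $f_3^3/f_1$: this is what leaves behind a pure power of $f_3$, which modulo $2$ collapses to a series in $q^{12}$ and thereby supplies the mod-$4$ rigidity. I expect that — rather than any calculation — to be the crux; the residue check for $n(3n-2)$ and the identity $f_t^2\equiv_2 f_{2t}$ are routine. The same philosophy should produce the other two proofs, e.g.\ by instead exploiting a theta expansion for $f_1^3/f_3$, or by passing through the $6$-regular partition generating function studied in \cite{KZ2}.
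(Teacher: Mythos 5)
Your argument is correct and coincides with the paper's second proof: the same factorization $1/(f_1f_3)=(f_3^3/f_1)\cdot(1/f_3^4)$, the same substitution of the $3$-core identity \eqref{33}, the same reduction $f_3^4\equiv_2 f_{12}$, and the same observation that $n(3n-2)$ is never $2$ or $3$ modulo $4$. Nothing to add.
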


\begin{proof}[First proof.]
We employ the modulo 2 identity
\begin{equation}\label{99}
f_1^3\equiv_2 f_3 +qf_9^3,
\end{equation}
which is a simple consequence of the following two classical formulas on pentagonal and triangular numbers, respectively:
$$f_1\equiv_2 \sum_{n\in \mathbb Z}q^{n(3n-1)/2};{\ }{\ }{\ }f_1^3\equiv_2 \sum_{n\ge 0}q^{\binom{n+1}{2}}.$$ 
Dividing across (\ref{99}) by $f_1^4f_3$ yields:
$$\frac{1}{f_1f_3}\equiv_2 \frac{1}{f_1^4}+q\frac{f_9^3}{f_3}\cdot \frac{1}{f_1^4}.$$
By (\ref{33}) with $q^3$ in lieu of $q$, the last displayed formula is
$$\frac{1}{f_1^4}\left( 1+q\frac{f_9^3}{f_3}\right) \equiv_2 \frac{1}{f_1^4}\left( 1+\sum_{n\in \mathbb Z} q^{1+3n(3n-2)}\right) \equiv_2 \frac{1}{f_1^4}\left( 1+\sum_{n\in \mathbb Z} q^{(3n-1)^2}\right).$$
The conclusion now follows since $f_1^4 \equiv_2 f_4$, and all integer squares are 0 or 1 (mod 4).
\end{proof}

\begin{proof}[Second proof.] 
By (\ref{33}), we have:
$$\frac{1}{f_1f_3}\equiv_2 \frac{1}{f_3^4}\cdot \frac{f_3^3}{f_1}\equiv_2 \frac{1}{f_3^4}\cdot \sum_{n\in \mathbb Z}q^{n(3n-2)}.$$
The proof is complete by observing that $f_3^4 \equiv_2 f_{12}$, and that $n(3n-2)$ is never 2 or 3 (mod 4).
\end{proof}

Note that, in a sense, our second proof greatly simplifies the idea behind Section 3 of \cite{sell}.

\begin{proof}[Third proof.] 
We make use of the following fact:
\begin{equation}\label{12}
f_1^3f_3^3\equiv_2 f_1^{12} +qf_3^{12}
\end{equation}
(see \cite[Theorem 2.1]{HS} and \cite[Formula (12)]{JKZ}).
We divide across (\ref{12}) by $f_1^4f_3^4,$ to obtain:
\begin{equation}\label{44}
\frac{1}{f_1f_3}\equiv_2 \frac{f_1^8}{f_3^4}+q\frac{f_3^8}{f_1^4}\equiv_2 \frac{f_8}{f_{12}}+q\frac{f_{24}}{f_4}.
\end{equation}
The result again immediately follows.
\end{proof}

\begin{remark}\label{r1}
Using the right side of (\ref{44}), we easily see that the parity of $a(n)$ along the arithmetic progression $4m+1$ is determined by the parity of 6-regular partitions, whose generating function is $f_6/f_1$. We conjectured in \cite[Conjecture 7]{KZ2} that the 6-regular partition function is odd precisely 50\% of the time, and in fact, that it is equidistributed modulo 2 along any nonconstant arithmetic progression. See \cite{KZ2} for more details.
\end{remark}

We conclude with some suggestions for future work, in order to comprehensively describe the behavior modulo 2 of Merca's partition function.

\begin{problem}\label{p1}
Show that:
\begin{itemize}
\item[ (1)] $a(n)$ is odd precisely 25\% of the time. In particular, by (\ref{44}), the goal is to prove that both the 6-regular partition function and the Fourier coefficients of the eta-quotient $f_2/f_3$ are equidistributed modulo 2.
\item[ (2)] There exists no nonconstant arithmetic progression, disjoint from $4m+2$ and $4m+3$, where $a(n)$ is identically even.
\end{itemize}
\end{problem}

\begin{remark}\label{r2}
It can be shown that, under our \lq \lq master conjecture'' on the parity of eta-quotients (\cite[Conjecture 4]{KZ}), the statements of parts (1) and (2) of Problem \ref{p1} are equivalent.
\end{remark}

In fact, we ask the following bolder question, which would further illuminate the behavior of $a(n)$.

\begin{question}\label{q1}
 Is $a(n)$ equidistributed modulo 2 along \emph{any} nonconstant arithmetic progression where it is not identically even?
\end{question}

\section*{Acknowledgements} We thank William Keith for useful computations. This work was partially supported by a Simons Foundation grant (\#630401).

\end{document}